\documentclass[12 pt, reqno]{amsart}

\usepackage{latexsym}
\usepackage{amssymb}
\usepackage{mathrsfs}
\usepackage{amsmath}
\usepackage{fancybox,color}
\usepackage{enumerate}
\usepackage[latin1]{inputenc}

\usepackage[colorlinks]{hyperref}
\usepackage{eurosym}
\usepackage{url}
\usepackage{graphicx}

\newcommand{\kom}[1]{}
 \def\1{\raisebox{2pt}{\rm{$\chi$}}}

\newtheorem{theorem}{Theorem}[section]
\newtheorem{corollary}[theorem]{Corollary}
\newtheorem{lemma}[theorem]{Lemma}
\newtheorem{proposition}[theorem]{Proposition}
\theoremstyle{definition}
\newtheorem{definition}[theorem]{Definition}

\newcommand{\R}{{\mathbb R}}

 \newcommand{\eps}{{\varepsilon}}
 \def\1{\raisebox{2pt}{\rm{$\chi$}}}
 
\newcommand{\abs}[1]{\left|#1\right|}

\newcommand{\sign}{\operatorname{sign}}

\def\vint_#1{\mathchoice%
          {\mathop{\kern 0.2em\vrule width 0.6em height 0.69678ex depth -0.58065ex
                  \kern -0.8em \intop}\nolimits_{\kern -0.4em#1}}%
          {\mathop{\kern 0.1em\vrule width 0.5em height 0.69678ex depth -0.60387ex
                  \kern -0.6em \intop}\nolimits_{#1}}%
          {\mathop{\kern 0.1em\vrule width 0.5em height 0.69678ex
              depth -0.60387ex
                  \kern -0.6em \intop}\nolimits_{#1}}%
          {\mathop{\kern 0.1em\vrule width 0.5em height 0.69678ex depth -0.60387ex
                  \kern -0.6em \intop}\nolimits_{#1}}}
\def\vintslides_#1{\mathchoice%
          {\mathop{\kern 0.1em\vrule width 0.5em height 0.697ex depth -0.581ex
                  \kern -0.6em \intop}\nolimits_{\kern -0.4em#1}}%
          {\mathop{\kern 0.1em\vrule width 0.3em height 0.697ex depth -0.604ex
                  \kern -0.4em \intop}\nolimits_{#1}}%
          {\mathop{\kern 0.1em\vrule width 0.3em height 0.697ex depth -0.604ex
                  \kern -0.4em \intop}\nolimits_{#1}}%
          {\mathop{\kern 0.1em\vrule width 0.3em height 0.697ex depth -0.604ex
                  \kern -0.4em \intop}\nolimits_{#1}}}

\newcommand{\aveint}[2]{\mathchoice%
          {\mathop{\kern 0.2em\vrule width 0.6em height 0.69678ex depth -0.58065ex
                  \kern -0.8em \intop}\nolimits_{\kern -0.45em#1}^{#2}}%
          {\mathop{\kern 0.1em\vrule width 0.5em height 0.69678ex depth -0.60387ex
                  \kern -0.6em \intop}\nolimits_{#1}^{#2}}%
          {\mathop{\kern 0.1em\vrule width 0.5em height 0.69678ex depth -0.60387ex
                  \kern -0.6em \intop}\nolimits_{#1}^{#2}}%
          {\mathop{\kern 0.1em\vrule width 0.5em height 0.69678ex depth -0.60387ex
                  \kern -0.6em \intop}\nolimits_{#1}^{#2}}}
\newcommand{\ud}{\, \text{d}}

\newcommand{\sgn}{\operatorname{sgn}}

\renewcommand{\div}{\operatorname{div}}
\newcommand{\dd}{\partial}
\renewcommand{\d}{\hspace{0.05em}\textup{d}}
\usepackage{changepage}

\makeatletter\def\SL@eqnlefttext #1{\hbox to 0pt{\kern 75 pt 
\llap{\SL@margintext{#1}\quad}\hss}}
\makeatother

\begin{document}

\title[Sign-changing uniqueness]{Uniqueness of sign-changing solutions to Trudinger's equation}
\author[Anttila]{Riku Anttila}
\address{Department of Mathematics and Statistics, University of
Jyv\"askyl\"a, FI-40014 Jyv\"askyl\"a, Finland}
\email{riku.t.anttila@jyu.fi}

\author[Lindqvist]{Peter Lindqvist}
\address{Department of Mathematical Sciences, Norwegian University of Science and Technology, NO-7491 Trondheim, Norway}
\email{peter.lindqvist@ntnu.no}

\author[Parviainen]{Mikko Parviainen}
\address{Department of Mathematics and Statistics, University of
Jyv\"askyl\"a, FI-40014 Jyv\"askyl\"a, Finland}
\email{mikko.j.parviainen@jyu.fi}

\date{\today}
\keywords{} \subjclass[2020]{}
\thanks{This work was supported by the Research Council of Finland (project 360185), and the Finnish Ministry of Education and Culture's Pilot for Doctoral Programmes (Pilot project Mathematics of Sensing, Imaging and Modelling). We thank Verena B\"ogelein for providing a reference.}

\begin{abstract}
   We  establish uniqueness of sign-changing solutions to Trudinger's parabolic equation with time dependent $C^2$ Cauchy-Dirichlet boundary data.
\end{abstract}

\maketitle

\section{Introduction}
We  consider the doubly non-linear evolutionary equation of Trudinger \cite{trudinger68}, which reads
\[
    \frac{\partial}{\partial t} \left(\abs{u}^{p-2}u\right)\, =\, \div(\abs{\nabla u}^{p-2} \nabla u), \quad 1 <  p < \infty,
    \]
    in the bounded domain $\Omega_T = \Omega \times (0,T),$ where $\Omega \subset \R^n$ is a Lip\-schitz domain.
    In \cite{trudinger68} N. Trudinger observed that, due to the homogeneous structure, no 'intrinsic scaling' is needed in Harnack's inequality. This was later proved by J. Kinnunen and T. Kuusi \cite{kinnunenk07} as well as by U. Gianazza and V. Vespri \cite{gianazzav06}.
In particular, when it comes to sign-changing solutions, little is known about Trudinger's equation in contrast to the   situation for the evolutionary $p$-Laplace equation
$$  \frac{\dd u}{\dd t}\,=\, \div (|\nabla u|^{p-2}\nabla u).$$
Especially, under natural assumptions the uniqueness of sign-changing solutions is a challenging open problem for Trudinger's equation. For   solutions $u$ with $\inf\{u\} \ge  0$ this is settled, see  \cite{lindgrenl22,lindqvistps} for references. 

\sloppy
We shall focus on the uniqueness of sign-changing weak solutions with time dependent boundary data.
Our main result, Theorem~\ref{thm:Main}, establishes uniqueness for sign-changing solutions with $C^2$ boundary data. It has not escaped our attention that the $C^2$ assumption can be somewhat relaxed. Proposition~\ref{prop:monotone} covers the special case of monotone lateral boundary values without additional regularity.  For solutions with time-independent Dirichlet boundary values on the lateral boundary $\partial \Omega \times [0,T]$\, F. Otto proved uniqueness in \cite{otto96}: for example \emph{weak solutions in} $L^p(0,T;W^{1,p}_0(\Omega))$ \emph{are unique}. In fact, these solutions turn out to  have a time  derivative $\partial_t(|u|^{p-2}u)$. Less regular solutions in  $L^p(0,T;W^{1,p}_0(\Omega))$ cannot exist!

The idea of our proof is, first following Otto's approach in \cite{otto96},   to use Kru\v{z}kov's \cite{kruzkov70} doubling of time variables  to establish the comparison principle, but now extended to time dependent boundary values. The argument requires some space between the boundary values. Thus an extra device is needed:  we finish the proof by considering the auxiliary solutions
 \begin{equation*}\begin{cases}
    u_{+\varepsilon}\quad\text{with boundary values}\quad\psi + \eps\\
    u_{-\varepsilon}\quad\text{with boundary values}\quad\psi - \eps.
\end{cases}\end{equation*}
 An \emph{arbitrary} weak solution $u$ with boundary values $\psi$ will be squeezed
 $$ u_{-\varepsilon}\,\leq\,u\,\leq\, u_{+\varepsilon}.$$
 We can conclude the proof by using the $C^2$ assumption as in \cite{lindqvistps} to prove  that both auxiliary solutions converge to \emph{the same solution} as $\varepsilon \to 0$. The limit procedure is based on an estimate which has been worked out only for $p >2$. However, Sections \ref{sec:prelim} and \ref{sec:comp} are valid in the full range $1 < p < \infty$.

\section{Preliminaries}
\label{sec:prelim}
 
Let $\Omega \subseteq \R^n$ be  a bounded Lipschitz domain. We  consider the equation in the domain  $\Omega_T = \Omega \times (0,T)$, when $1  <  p < \infty$. (In Section 4 we must take $p\geq 2$.)
We use the standard notation $$
    \partial_p \Omega_T := (\overline{\Omega} \times \{0\}) \cup (\partial \Omega \times [0,T]).
    $$
    for the parabolic boundary 
and denote the \emph{parabolic Sobolev space} by $L^p(0,T;W^{1,p}(\Omega))$. Recall that this function space consists of measurable functions $u : \Omega_T  \to \R$ such that the function  $x \mapsto u(x,t)$ belongs to $W^{1,p}(\Omega)$ for almost every $t \in (0,T)$, and $u$ has the finite  norm
\[
    \biggl(\int\limits_{0}^T \!\! \int\limits_\Omega \bigl(\abs{u}^p + \abs{\nabla u}^p \bigr)\d x \d t \biggr)^{\frac{1}{p}} < \infty.
\]
Here $\nabla u$ denotes the spatial gradient.
The function space $L^p(0,T;W^{1,p}_0(\Omega))$ is defined analogously.
Define 
$$b(z) := \abs{z}^{p-2}z, \qquad  b(0) = 0.$$
We write  $z^+ = \max\{z,0\}$.

\begin{definition}\label{def:PDE}
    Let $\psi \in C(\overline{\Omega_T}) \cap L^p(0,T;W^{1,p}(\Omega))$, and\\ $u \in L^p(0,T;W^{1,p}(\Omega))$. We say that $u$ is a \emph{weak solution} with the (Dirichlet) boundary values $\psi$ if the following conditions hold.
    \begin{enumerate}
        \item $u - \psi \in L^p(0,T;W_0^{1,p}(\Omega))$.
        \vspace{3pt}
        \item For every $\gamma \in C_0^{\infty}(\Omega_T)$ we have
        \begin{equation}\label{eq:PDE}
            \int\limits_0^T \!\! \int\limits_\Omega \bigl( -b(u)\partial_t \gamma + \langle \abs{\nabla u}^{p-2}\nabla u,\nabla \gamma \rangle\bigr) \d x \d t\, =\, 0.
        \end{equation}
        \vspace{3pt}
        \item $u$ satisfies the initial value condition 
        \begin{equation}\label{eq:InitialValues}
            \lim_{t \to 0+} \int\limits_\Omega \abs{b(u(x,t)) - b(\psi(x,t))} \d x\, =\, 0.
        \end{equation}
    \end{enumerate}
\end{definition}
Given $\psi$ as above, a weak solution $u$ always exists with the given initial condition, see \cite{altl83} and  also \cite{misawan23}. By \cite{bogeleindl21} weak solutions are locally H\"{o}lder continuous. 
By Theorem 1.2  and Theorem 1.3 in \cite{bogeleindl21}, the boundary values are continuously attained in $\partial \Omega \times (0,T)$ and $\Omega \times \{0\}$.

\bigskip

{\bf Remark.}  It is customary to treat the (Cauchy) \emph{initial values} and the (Dirichlet) \emph{lateral boundary values} separately, as in \cite{altl83}, \cite{otto96}, \cite{bogeleindl21} for instance. So we require
\begin{equation*}\begin{cases}
    u(x,0) = \psi_0(x)\quad\text{when}\quad x \in \Omega\\
    u(x,t) = \psi(x,t) \quad \text{on}\quad \partial \Omega \times (0,T].
\end{cases}\end{equation*}
More precisely, conditions  (1) and (2) above are required for a bounded function $\psi$ belonging to $C(\overline{\Omega_T})\,\cap L^p(0,T;W^{1,p}(\Omega))$.
Condition (3) now reads
$$
\lim_{t \to 0+} \int\limits_\Omega \abs{b(u(x,t)) - b(\psi_0(x))} \d x\, =\, 0
$$
for some initial values $\psi_0 \in C(\overline{\Omega})$. A classical example, for instance, is $\psi \equiv 1,\,\,\psi_0 \equiv 0.$

\section{Comparison principle}\label{sec:comp}

In this section  a suitable comparison principle for the uniqueness proof is provided.
Our method is strongly influenced by Otto's work \cite{otto96}, but includes some necessary modifications to handle time-dependent boundary values.
The  method originates from the work of S. Kru\v{z}kov \cite{kruzkov70}.
See \cite[Section 4]{otto96} for further discussion.

It will be important \emph{not}  to assume  any existence of time derivatives. Indeed, under such an extra assumption 
the result is well-known. For the convenience of the reader, we present a standard proof following the notation in \cite{lindqvistps}: Consider two solutions $u_1,u_2$ that have \emph{time derivatives} $\partial_t(|u_j|^{p-2}u_j)$ at least in $L^1(\Omega_T)$ and take the same boundary values.
Let $H_{\delta}(s)$ denote the approximation 
\begin{equation*}H_{\delta}(s) \,=\, \begin{cases} 0,\,\,\,s\leq 0 \\
    \dfrac{s}{\delta},\,\,\,0 <s<\delta\\
    1,\, \,\,s\geq \delta.
\end{cases} \end{equation*}
of the Heaviside function.
We use the test  function $\zeta = H_{\delta}\bigl(u_2-u_1\bigr)$ in both equations
$$\int\limits_{t_1}^{t_2}\!\!\int\limits_{\Omega}\Bigl(\zeta\, \dd_t(|u_j|^{p-2}u_j)\,+\, \big\langle |\nabla u_j|^{p-2}\nabla u_j,\nabla \zeta \big\rangle \Bigr) \d x \d t\,=\,0, \quad j = 1,2.$$
 Subtracting the equations, we get 
 \begin{gather*}
  \int\limits_{t_1}^{t_2}\!\!\int\limits_{\Omega}\dd_t\left(|u_2(x,t)|^{p-2}u_2(x,t) -|u_1(x,t)|^{p-2}u_1(x,t)\right)H_{\delta}\bigl(u_2-u_1\bigr) \d x \d t\\
  = - \frac{1}{\delta}\int \!\!\int\! \big\langle |\nabla u_2|^{p-2}\nabla u_2 -   |\nabla u_1|^{p-2}\nabla u_1,\nabla u_2 - \nabla u_1 \big\rangle \d x \d t\,\,\leq\,\, 0,
\end{gather*}
by the vector inequality $\big\langle |b|^{p-2}b-|a|^{p-2}a,b -a \big\rangle \,\geq\,0$. Letting $\delta \to 0$ we obtain
$$ \int\limits_{t_1}^{t_2}\!\!\int\limits_{\Omega}\dd_t\left(|u_2(x,t)|^{p-2}u_2(x,t) -|u_1(x,t)|^{p-2}u_1(x,t)\right)^+\! \d x \d t\,\, \leq \,\, 0.$$
This implies
\begin{align*}
  &\int\limits_{\Omega}\left(|u_2(x,t)|^{p-2}u_2(x,t) -|u_1(x,t)|^{p-2}u_1(x,t)\right)^+\! \d x \\  \leq \,&\int\limits_{\Omega}\left(|u_2(x,t_1)|^{p-2}u_2(x,t_1) -|u_1(x,t_1)|^{p-2}u_1(x,t_1)\right)^+\! \d x \,\,\rightarrow \,\,0,
\end{align*}
when $ t_1\, \to\,0$.
We conclude that $u_2\leq u_1$ in $\Omega_T$. By symmetry  $u_1\leq u_2$.  

The same argument proves the corresponding  comparison principle for solutions with time derivatives.

Next we state the comparison principle with boundary values in $ C(\overline{\Omega_T}) \cap L^p(0,T;W^{1,p}(\Omega))$. Observe that now no existence of time derivatives for solutions is assumed. 
\begin{theorem}[\textsf{Comparison Principle}] \label{thm:Comparison-Princple}
  Let $1 < p < \infty.$  Let $u_1$ and $u_2$ be two weak solutions with boundary values $\psi_1,\psi_2$ as in Definition~\ref{def:PDE}.
    Assume that there is a constant $\varepsilon > 0$ such that
    \begin{equation}\label{eq:psi-epsilon}
    \psi_1(x,t) \leq \psi_2(x,t) - \varepsilon \quad\text{ for all }\quad (x,t) \in \partial_p \Omega_T.
\end{equation}
Then $u_1 \leq u_2$ in  $ \Omega_T$.
\end{theorem}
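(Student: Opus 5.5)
We will prove the comparison principle by Kru\v{z}kov's doubling of the time variable, following Otto \cite{otto96}, so that no time derivative of $b(u_j)$ is ever needed. Write $u_1=u_1(x,t)$ and $u_2=u_2(x,s)$, regarded as functions on $\Omega\times(0,T)\times(0,T)$.

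\emph{Step 1: the two doubled inequalities.} In the equation for $u_1$ (variables $(x,t)$) we use the test function
$$\varphi(x,t,s)=H_\delta\bigl(u_1(x,t)-u_2(x,s)\bigr)\,\eta(t,s),$$
with $s$ frozen. Here $\eta\ge 0$ is smooth with compact support in $(0,T)^2$, of the form $\eta(t,s)=\theta\bigl(\tfrac{t+s}{2}\bigr)\rho_h(t-s)$, where $\rho_h$ is a mollifier and $\theta\in C_0^\infty(0,T)$. Symmetrically, in the equation for $u_2$ (variables $(x,s)$) we use $-H_\delta(u_1(x,t)-u_2(x,s))\eta$ with $t$ frozen.

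These test functions are admissible only if they vanish on $\partial\Omega$, and this is exactly where the hypothesis \eqref{eq:psi-epsilon} enters. Since $\psi_1,\psi_2$ are continuous and attained continuously on the lateral boundary (Theorem 1.2 in \cite{bogeleindl21}), we have, for $|t-s|<h$ with $h$ small (by uniform continuity of $\psi_2$ in time),
$$u_1(x,t)-u_2(x,s)\le \psi_1(x,t)-\psi_2(x,s)\le -\eps/2<0$$
in a neighbourhood of $\partial\Omega$. Hence $H_\delta(u_1-u_2)=0$ near the lateral boundary, and the test function has compact support in $x$. This is why a gap $\eps>0$ is required rather than $\psi_1\le\psi_2$.

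The time derivative is the delicate part, because $\varphi$ involves $u_1$ itself and $\partial_t b(u_1)$ is not available. We therefore use Steklov averages, or the standard entropy-type lemma of Otto and Alt--Luckhaus: for test functions of the form $H_\delta(u_1-k)\zeta$,
$$\iint -b(u_1)\,\partial_t(\cdots)$$
can be rewritten as
$$-\iint \Phi_\delta(u_1,k)\,\partial_t\zeta,\qquad \Phi_\delta(u,k)=\int_k^u H_\delta(\sigma-k)\,b'(\sigma)\d\sigma,$$
up to a sign-correct error. Applying this with $k=u_2(x,s)$, integrating in $s$, doing the same for $u_2$, and adding, the gradient terms combine into
$$\frac1\delta\iint_{\{0<u_1-u_2<\delta\}}\bigl\langle |\nabla u_1|^{p-2}\nabla u_1-|\nabla u_2|^{p-2}\nabla u_2,\nabla u_1-\nabla u_2\bigr\rangle\,\eta\ \ge 0,$$
plus cross terms of the form $\int H_\delta\,\langle\cdot,\nabla\rangle$, which vanish because $\eta$ does not depend on $x$. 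This gives
$$-\iiint \Phi_\delta\,(\partial_t+\partial_s)\eta\le 0.$$

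\emph{Step 2: limits.} Let $\delta\to0$. Then $\Phi_\delta\to\bigl(b(u_1)-b(u_2)\bigr)^+$, and since $(\partial_t+\partial_s)\eta=\theta'\bigl(\tfrac{t+s}2\bigr)\rho_h(t-s)$, the inequality becomes
$$-\iiint\bigl(b(u_1(x,t))-b(u_2(x,s))\bigr)^+\theta'\rho_h\le 0.$$
Letting $h\to0$, with $L^1$-continuity of translations in time, yields
$$-\int_0^T\!\!\int_\Omega\bigl(b(u_1)-b(u_2)\bigr)^+\theta'(t)\d x\d t\le 0.$$
Thus $t\mapsto\int_\Omega(b(u_1)-b(u_2))^+\d x$ is a.e.\ nonincreasing. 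We take $\theta$ approximating $\chi_{(t_1,t_2)}$ and let $t_1\to0$. By \eqref{eq:InitialValues} and $\psi_1\le\psi_2-\eps$ on $\overline\Omega\times\{0\}$ the initial value of this quantity is $0$, since $(b(\psi_1)-b(\psi_2))^+=0$ there. Hence $b(u_1)\le b(u_2)$, and since $b$ is strictly increasing, $u_1\le u_2$.

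\emph{Main obstacle.} The hardest step is justifying the time-derivative term in Step 1 without $\partial_t b(u)$, i.e.\ the chain-rule identity for $\iint -b(u_1)\partial_t\bigl[H_\delta(u_1-k)\zeta\bigr]$. We would first try Steklov averaging $[b(u_1)]_h$ combined with the convexity inequality
$$\bigl([b(u)]_h-b(u)\bigr)\,\partial_h(\cdot)\ \ge\ \text{(telescoping term)},$$
as in Alt--Luckhaus. This inequality holds only in the favorable direction, which is enough here. A secondary concern is making the boundary-support argument rigorous when $u_j$ is merely continuous up to the lateral boundary; this requires that $u_1(\cdot,t)-u_2(\cdot,s)$ stay below $-\eps/2$ uniformly on a fixed neighbourhood of $\partial\Omega\times[\operatorname{supp}\theta]$, which follows from uniform continuity on compact time intervals.
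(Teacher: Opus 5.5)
Your proposal is correct and is essentially the paper's proof: Kru\v{z}kov doubling of the time variable, Steklov averages combined with the monotonicity inequality of Lemma~\ref{lemma:sub-diff} (your $H_\delta$ and $\Phi_\delta$ play the roles of $\eta_{\sigma,+}'$ and $q_{\sigma,+}$), the gap $\varepsilon$ to make the test functions vanish near the lateral boundary, and then $\delta\to0$, $h\to0$, and a time cut-off with $t_1\to0$. Two small inaccuracies, neither of which breaks the argument: the entropy coming from the $u_2$-equation is $\int_{u_2}^{u_1}H_\delta(u_1-\tau)\,b'(\tau)\,\mathrm{d}\tau$ (the paper's $q_{\sigma,-}$), not the same $\Phi_\delta$, although both converge to $(b(u_1)-b(u_2))^+$; and near $\partial\Omega$, continuity only makes $u_1(x,t)-u_2(x,s)$ close to $\psi_1(x,t)-\psi_2(x,s)$ rather than below it, which still suffices (the paper instead gets admissibility directly from $u_j-\psi_j\in L^p(0,T;W^{1,p}_0(\Omega))$, without using boundary continuity of $u_j$).
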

We introduce the tools for constructing the required test functions. Fix a  convex function $\eta \in C^\infty(\R)$ satisfying
\begin{align*}
\eta(z) :=
    \begin{cases}
        0 & \text{ when } z \leq 0\\
        z - \frac{1}{2} & \text{ when } z \geq 1.
    \end{cases}
\end{align*}
A key property  will be that, upon rescaling, the derivative $\eta'$ approximates the Heaviside function and converges to the $\sign$-function.
For a parameter $\sigma > 0$, we define the two functions
\begin{align*}
\eta_{\sigma,+}(z) := \sigma\eta\bigl( \frac{z}{\sigma} \bigr) \quad\text{ and }\quad \eta_{\sigma,-}(z) := \sigma\eta\bigl(-\frac{z}{\sigma}\bigr),    
\end{align*}
and also
\begin{equation}\label{eq:q-function}
    q_{\sigma,\pm}(z,w) := \int\limits_{w}^z \!\eta_{\sigma,\pm}'(s\, -\, w)\, b'(s) \,\d s.
\end{equation}
Here $\eta_{\sigma,\pm}'$ is the derivative of $\eta_{\sigma,\pm}$, and $b'(z) = (p-1)|z|^{p-2}$ belonging to $ L^1_{\text{loc}}(\R)$ is the distributional derivative of $b$. 
Note that $b'(z) > 0$ when $z\neq 0$.
 
  Recall that a smooth function $\R \to \R$ is convex if and only if its first derivative is non-decreasing.
    Since $\eta$ is a smooth, convex function, so is $\eta_{\sigma,\pm}$  and hence the derivative  $\eta_{\sigma,\pm}'$ is \emph{non-decreasing}. This will be crucial in the next lemma.

\begin{lemma}\label{lemma:sub-diff}
    For all $z_1,z_2,w \in \R$ and $\sigma > 0$ we have
    \[
        q_{\sigma,\pm}(z_1,w) - q_{\sigma,\pm}(z_2,w) \,\geq \,\eta_{\sigma,\pm}'(z_2 - w)\,(b(z_1) - b(z_2)). 
    \]
\end{lemma}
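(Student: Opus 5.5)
The plan is to write the difference of the two $q$-values as a single integral and use the monotonicity of $\eta_{\sigma,\pm}'$. By the definition \eqref{eq:q-function}, additivity of the integral gives
\[
q_{\sigma,\pm}(z_1,w) - q_{\sigma,\pm}(z_2,w) \,=\, \int\limits_{z_2}^{z_1} \eta_{\sigma,\pm}'(s-w)\, b'(s)\,\d s .
\]
This requires only that $b' \in L^1_{\text{loc}}(\R)$ and that $\eta_{\sigma,\pm}'$ is bounded and continuous. Since $b$ is absolutely continuous on bounded intervals with derivative $b'$, we also have
\[
b(z_1)-b(z_2) \,=\, \int\limits_{z_2}^{z_1} b'(s)\,\d s .
\]
Subtracting the right-hand side of the claim then reduces the lemma to showing
\[
\int\limits_{z_2}^{z_1} \bigl(\eta_{\sigma,\pm}'(s-w) - \eta_{\sigma,\pm}'(z_2-w)\bigr)\, b'(s)\,\d s \,\geq\, 0 .
\]

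We split into the cases $z_1\ge z_2$ and $z_1<z_2$, using that $\eta_{\sigma,\pm}'$ is non-decreasing (both $\eta_{\sigma,+}$ and $\eta_{\sigma,-}$ are convex, as noted before the lemma) and that $b'\ge 0$.

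If $z_1 \geq z_2$, then for $s\in[z_2,z_1]$ we have $s-w\ge z_2-w$. Hence the integrand $\bigl(\eta_{\sigma,\pm}'(s-w)-\eta_{\sigma,\pm}'(z_2-w)\bigr)b'(s)$ is nonnegative, and the integral over an increasing interval is $\ge 0$.

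If $z_1<z_2$, the integral equals $-\int_{z_1}^{z_2}(\cdots)\,\d s$. For $s\in[z_1,z_2]$ we have $s-w\le z_2-w$, so the bracket is $\le 0$ and the integrand is $\le 0$. The minus sign then makes the expression $\ge 0$.

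The lemma is therefore essentially the statement that $q_{\sigma,\pm}(\cdot,w)$ is "convex with respect to $b$": $\eta'$ evaluated at the endpoint $z_2$ acts as a subgradient. The only delicate point is justifying the fundamental theorem of calculus for $b$ when $1<p<2$, where $b'$ is singular at $0$ but still locally integrable. This is routine, because $b(z)=|z|^{p-2}z$ is absolutely continuous on compact intervals, so I expect no real obstacle.
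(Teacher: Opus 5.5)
Your proposal is correct and follows essentially the same route as the paper: write the difference as $\int_{z_2}^{z_1}\eta_{\sigma,\pm}'(s-w)\,b'(s)\,\d s$, use that $\eta_{\sigma,\pm}'$ is non-decreasing and $b'\ge 0$, and treat the cases $z_1\ge z_2$ and $z_1<z_2$ separately. Your remark on the absolute continuity of $b$ for $1<p<2$ makes explicit a step the paper uses implicitly.
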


\begin{proof}
   We prove the claim for $\eta_{\sigma,+}$ since the proof for $\eta_{\sigma,-}$ is similar. First assume that $z_1\ge z_2$ and observe
    \begin{align*}
q_{\sigma,+}(z_1,w)- q_{\sigma,+}(z_2,w)
&=\,\,\int\limits_{z_2}^{z_1}\eta_{\sigma,+}'(s-w)\,b'(s)\ud s\\
\geq \,\,\eta_{\sigma,+}'(z_2-w) \int\limits_{z_2}^{z_1}\,b'(s)\ud s
&= \,\,\eta_{\sigma,+}'(z_2-w)\, (b(z_1)-b(z_2)).
\end{align*}
In the inequality we used the fact that $\eta_{\sigma,+}'$ is non-decreasing. If $z_1<z_2$, then the estimate reads as 
\begin{align*}
q_{\sigma,+}(z_1,w)- q_{\sigma,+}(z_2,w)&=\,\,-\int\limits_{z_1}^{z_2}\eta_{\sigma,+}'(s-w)b'(s)\ud s\\
&\,\,\ge \eta_{\sigma,+}'(z_2-w)\,(b(z_1)-b(z_2)).
\end{align*}
This proves the inequality.
\end{proof}

By convention, we often omit the spatial variable $x$ and write $u(t) = u(x,t)$ and $\nabla u(t) = \nabla u(x,t).$
Finally, let $\phi \in C_0^\infty(\R)$ be a function satisfying
\begin{equation*}
    \begin{cases}
        \phi(z) \geq 0\, \text{ for all } \,z \in \R, \\
        \phi(z) = 0 \,\text{ for all }\, \abs{z} \geq 1,\\
        \int_{\R} \phi \, \d z = 1.\\
    \end{cases}
\end{equation*}
For a parameter $\lambda > 0$ and a non-negative function $\gamma \in  C_0^\infty(0,T)$, we define the test function $\gamma_\lambda \in C_0^\infty(\R^2)$ by
\begin{equation}\label{eq:gamma-lambda}
    \gamma_\lambda(t,s) := \frac{1}{\lambda}\, \phi\Bigl( \frac{t - s}{\lambda} \Bigr)\, \gamma\Bigl( \frac{t + s}{2} \Bigr).
\end{equation}
There is no $x$-dependence here. Thus $\nabla \gamma_{\lambda} = 0.$ Later we shall send $\lambda$ to $0$.

\begin{lemma}[\textsf{Key lemma}]
\label{lemma:Key}
    Let  $\psi_1,\psi_2, u_1,u_2$ and the constant $\eps>0$ be  as in Theorem \ref{thm:Comparison-Princple}, and let  $\gamma \in C_0^\infty(0,T)$ be non-negative.
    For all $\lambda$ small enough, all $\sigma > 0$, and almost every $s \in (0,T)$, we have
    \begin{align}\label{eq:Key1}
        0 & \geq \int\limits_0^T \!\!  \int\limits_\Omega\Bigl\{-q_{\sigma,+}(u_1(t),u_2(s)) \partial_t \gamma_\lambda(t,s)\Bigr.\\
        + & \, \Bigl.\gamma_\lambda(t,s)  \eta_{\sigma,+}''(u_1(t)\! -\! u_2(s)) \langle \abs{\nabla u_1(t)}^{p-2} \nabla u_1(t) , \nabla u_1(t)\! -\! \nabla u_2(s) \rangle \!\Bigr\}\d x \d t. \nonumber
    \end{align}
    Similarly for small enough $\lambda$, all $\sigma > 0$, and almost every $t \in (0,T)$, we have
    \begin{align}\label{eq:Key2}
        0 & \geq \int\limits_0^T \!\! \int\limits_\Omega \Bigl\{-q_{\sigma,-}(u_2(s),u_1(t)) \partial_s \gamma_\lambda(t,s)\Bigr.\\
        - & \,\Bigl. \gamma_\lambda(t,s) \eta_{\sigma,+}''(u_1(t)\! -\! u_2(s)) \langle \abs{\nabla u_2(s)}^{p-2} \nabla u_2(s) , \nabla u_1(t) - \nabla u_2(s) \rangle\!\Bigr\} \d x  \d s. \nonumber
    \end{align}
\end{lemma}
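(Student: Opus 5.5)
Fix $s$ and treat $u_2(s)$ as a frozen function of $x$ only. In the weak formulation \eqref{eq:PDE} for $u_1$, we take the test function
\[
\zeta(x,t) := \eta_{\sigma,+}'\bigl(u_1(x,t) - u_2(x,s)\bigr)\,\gamma_\lambda(t,s).
\]
The first task is admissibility, and this is where the hypothesis \eqref{eq:psi-epsilon} enters. By the continuity of $\psi_1,\psi_2$ and the fact that $u_j-\psi_j\in L^p(0,T;W^{1,p}_0(\Omega))$, we get $u_1(t)-u_2(s)\le \psi_1(t)-\psi_2(s)$ "on $\partial\Omega$" in the Sobolev sense. For $|t-s|<\lambda$ with $\lambda$ small, uniform continuity of $\psi_2$ gives $\psi_1(t)-\psi_2(s)\le -\varepsilon/2<0$ near $\partial\Omega$. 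Since $\eta_{\sigma,+}'$ vanishes on $(-\infty,0]$, the function $\zeta(\cdot,t)$ then lies in $W^{1,p}_0(\Omega)$. To make this rigorous, I would write $\eta'_{\sigma,+}(u_1(t)-u_2(s)) = \eta'_{\sigma,+}\bigl((u_1(t)-\psi_1(t)) - (u_2(s)-\psi_2(s)) + (\psi_1(t)-\psi_2(s))\bigr)$ and use that $(v-c)^+\in W^{1,p}_0$ when $v\in W^{1,p}_0$ and $c\ge0$ locally near the boundary. Here "$\lambda$ small enough" is needed, and the support of $\gamma$ keeps $t$ away from $0$ and $T$. One issue is that $\zeta$ is not smooth in $t$. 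So \eqref{eq:PDE} must first be extended by density to test functions in $L^p(0,T;W^{1,p}_0)$ with compact support in time, which requires a time regularization.

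The main obstacle is the time term $-\int\!\!\int b(u_1)\,\partial_t\zeta$, because no time derivative of $b(u_1)$ is available. I would handle it with a Steklov average (or exponential mollification) $[b(u_1)]_h$ in time, for which the mollified equation
\[
\int\!\!\int \bigl(\partial_t [b(u_1)]_h\,\zeta + \langle [|\nabla u_1|^{p-2}\nabla u_1]_h,\nabla\zeta\rangle\bigr)\,\d x\,\d t=0
\]
holds for admissible $\zeta$. The time term is then bounded from below by means of Lemma~\ref{lemma:sub-diff}. With $w=u_2(s)$, $z_1=u_1(t+h)$ and $z_2=u_1(t)$, the lemma gives
\[
\eta_{\sigma,+}'(u_1(t)-w)\,\bigl(b(u_1(t+h))-b(u_1(t))\bigr)\le q_{\sigma,+}(u_1(t+h),w)-q_{\sigma,+}(u_1(t),w).
\]
We divide by $h$, multiply by $\gamma_\lambda\ge 0$, integrate, and shift the difference quotient onto $\gamma_\lambda$ by a discrete integration by parts. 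Letting $h\to0$ then bounds the time term from above by $-\int\!\!\int q_{\sigma,+}(u_1(t),u_2(s))\,\partial_t\gamma_\lambda$. This is the inequality direction we want. Passing to the limit requires $q_{\sigma,+}(u_1,w)\in L^1$, which holds because $|q_{\sigma,+}(z,w)|\le C(|b(z)|+|b(w)|)$. The elliptic term converges in the usual way in $L^{p'}$.

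The elliptic term is
\[
\nabla\zeta=\gamma_\lambda\,\eta_{\sigma,+}''(u_1(t)-u_2(s))\,(\nabla u_1(t)-\nabla u_2(s)),
\]
since $\nabla\gamma_\lambda=0$. Combining the two terms yields \eqref{eq:Key1} for a.e.\ $s$, where "a.e." refers to Lebesgue points and Fubini. The second inequality \eqref{eq:Key2} follows symmetrically from the equation for $u_2$ in the variable $s$, with $t$ frozen. There we use the test function $\eta_{\sigma,-}'(u_2(s)-u_1(t))\gamma_\lambda = -\eta_{\sigma,+}'(u_1(t)-u_2(s))\gamma_\lambda$. It is again admissible because the argument is positive only away from the boundary. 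Its gradient gives $-\gamma_\lambda\eta''_{\sigma,+}(u_1(t)-u_2(s))(\nabla u_1(t)-\nabla u_2(s))$, and Lemma~\ref{lemma:sub-diff} applied to $q_{\sigma,-}$ handles the time term in the same way.
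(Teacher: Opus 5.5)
You have the same architecture as the paper: the test function $\eta_{\sigma,+}'(u_1(t)-u_2(s))\gamma_\lambda$, Steklov regularization in time, Lemma~\ref{lemma:sub-diff}, a discrete integration by parts onto $\gamma_\lambda$, and the chain rule for the gradient. Your admissibility argument is more detailed than the paper's. But the one step that carries the content has the wrong time orientation, and as written your computation yields the reverse of \eqref{eq:Key1}.

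Since the regularized time term equals minus the elliptic term, \eqref{eq:Key1} amounts to a \emph{lower} bound
\[
\liminf_{h\to 0}\int\!\!\int \partial_t[b(u_1)]_h\,\zeta \;\geq\; -\int\!\!\int q_{\sigma,+}(u_1(t),u_2(s))\,\partial_t\gamma_\lambda .
\]
With the forward average, $h^{-1}\bigl(b(u_1(t+h))-b(u_1(t))\bigr)$ is paired with $\eta_{\sigma,+}'(u_1(t)-u_2(s))$, i.e.\ with $\eta_{\sigma,+}'$ at the \emph{earlier} time. The lemma with $z_1=u_1(t+h)$, $z_2=u_1(t)$ then bounds this from \emph{above}. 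Inserting that into the mollified equation gives
\[
\int\!\!\int\bigl\{-q_{\sigma,+}\partial_t\gamma_\lambda+\gamma_\lambda\eta_{\sigma,+}''\langle\abs{\nabla u_1}^{p-2}\nabla u_1,\nabla u_1(t)-\nabla u_2(s)\rangle\bigr\}\geq 0,
\]
which is not \eqref{eq:Key1}. So your sentence ``This is the inequality direction we want'' is false, and it contradicts your own earlier ``bounded from below''. This is not cosmetic. Lemma~\ref{lemma:sub-diff} is a tangent-line inequality: $q_{\sigma,+}(\cdot,w)$ is convex as a function of $b(\cdot)$, and the tangent is taken at the point where $\eta_{\sigma,+}'$ is evaluated. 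It bounds the increment from below only if that point is the \emph{later} time.

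The fix is to pair the backward quotient $h^{-1}\bigl(b(u_1(t))-b(u_1(t-h))\bigr)$ with $\eta_{\sigma,+}'(u_1(t)-u_2(s))$. The paper does this by inserting the forward Steklov average $f_h$ of the test function $f$ into \eqref{eq:PDE}. The shift $t\mapsto t-h$ (harmless, since $f$ vanishes near $t=0$ and $t=T$) turns $\int\!\!\int b(u_1)\partial_t f_h$ into $-\int\!\!\int h^{-1}\bigl(b(u_1(t))-b(u_1(t-h))\bigr)f(t)$. The lemma with $z_1=u_1(t-h)$, $z_2=u_1(t)$ then gives the lower bound
\[
h^{-1}\bigl(q_{\sigma,+}(u_1(t),u_2(s))-q_{\sigma,+}(u_1(t-h),u_2(s))\bigr)\gamma_\lambda ,
\]
which is shifted onto $-q_{\sigma,+}(u_1(t),u_2(s))\,h^{-1}\bigl(\gamma_\lambda(t+h,s)-\gamma_\lambda(t,s)\bigr)$. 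Within your own formulation, the equivalent repair is to use the backward Steklov average $h^{-1}\int_{t-h}^{t}$ of the equation. The exponential time mollification you mention only looks backward in time, so it would also work. The same correction, in the variable $s$, is needed for \eqref{eq:Key2}.
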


\begin{proof}
    We first prove \eqref{eq:Key1}.
    By the uniform continuity, there is a $\delta >0$ such that $$|\psi_1(x,t) -\psi_1(x,s)| < \varepsilon/2 \quad \text{when} \quad  \abs{t-s} < \delta$$
    for all $t,s \in [0,T]$  and $x \in \partial \Omega.$  It follows that 
    \[
    \psi_1(x,t) - \psi_2(x,s) 
    <  -\varepsilon/2.
    \]
    By the definition of $\gamma_\lambda$, it holds that
    $$
    \gamma_\lambda(t,s) = 0\quad \text{when} \quad   \abs{t-s} \geq \lambda $$
    for all $t,s \in [0,T]$.

    Also for small enough $\lambda>0$ only depending on the support of $\gamma$, it holds that $t \mapsto \gamma_\lambda(t,s) \in C_0^\infty(0,T)$ for all $s > 0$. From now on we only consider small enough $\lambda>0$; we see  that the choice of $\lambda$ only depends on $\gamma$ and $\delta$.

Recall that $\eta_{\sigma,+}'(z) = 0$ when $z \leq 0$.
We next observe that
    \[
        f(x,t) := \eta_{\sigma,+}'(u_1(x,t) - u_2(x,s))\, \gamma_{\lambda}(t,s)
    \]
   belongs to
    $L^p(0,T;W_{0}^{1,p}(\Omega))$ for almost every $s \in (0,T)$.
    We prove that \eqref{eq:Key1} holds for such $s$.

    For notational convenience, we extend $f(t) = 0$ for $t > T$ and $t < 0$, and put  $u_1(t) = \psi_1(0)$ for $t < 0$.
    We first consider the Stekloff averages
    \[
        f_{h}(x,t) := \frac{1}{h} \int\limits_{t}^{t+h} f(x,\tau)\d \tau.
    \]
    Note that $f_h \in L^p(0,T;W_{0}^{1,p}(\Omega))$ for all small enough $h > 0$, and that $f_h$ has  the  time derivative
    \[
        \partial_t f_{h}(t) = \frac{f(t+h) - f(t)}{h} \in L^\infty(\Omega_T).
    \]
    Hence, we may use $f_{h}$ as a test function in the equation  \eqref{eq:PDE}.
    This gives
    \[
        \int\limits_0^T \!\! \int\limits_\Omega b(u_1(t))\partial_t f_{h}(t)\d x \d t \, =\, \int\limits_{0}^T \!\! \int\limits_\Omega  \langle \abs{\nabla u_1(t)}^{p-2}\nabla u_1(t) ,\nabla f_{h}(t)\rangle \d x \d t.
    \]
    We also have
    \begin{align*}
        \int\limits_{0}^T \!\! \int\limits_\Omega b(u_1(t))\partial_t f_{h}(t) \d x \d t & = \int\limits_0^T \!\! \int\limits_\Omega b(u_1(t)) \, \frac{f(t\!+\!h) - f(t)}{h} \,\d x \d t\\
        & = \int\limits_0^T \!\! \int\limits_\Omega - \,\frac{b(u_1(t)) - b(u_1(t\!-\!h))}{h}f(t)\d x \d t.
    \end{align*}
    The second equality follows from a simple change of variables and the notational conventions made earlier in the proof.
    By the definition of $f$ and Lemma \ref{lemma:sub-diff}, we estimate
    \begin{align*}
      \int\limits_0^T& \!\! \int\limits_\Omega \frac{b(u_1(t)) - b(u_1(t-h))}{h}\,f(t) \d x \d t\\
        = & \quad \int\limits_0^T \!\! \int\limits_\Omega \frac{b(u_1(t)) - b(u_1(t-h))}{h}\,\eta_{\sigma,+}'(u_1(t) \!-\! u_2(s))\gamma_\lambda(t,s)\d x \d t\\
        \geq & \quad \int\limits_0^T \!\! \int\limits_\Omega \frac{q_{\sigma,+}(u_1(t),u_2(s)) - q_{\sigma,+}(u_1(t-h),u_2(s))}{h}\,\gamma_\lambda(t,s)\d x \d t\\
        = & \quad \int\limits_0^T \!\! \int\limits_\Omega -q_{\sigma,+}(u_1(t),u_2(s))\, \frac{\gamma_\lambda(t+h,s) - \gamma_\lambda(t,s)}{h}\d x \d t.
    \end{align*}
    In the last equality we used the fact that $u_2(s)$ does not depend on $t$.
    A  combination of the previous three displays gives
    \begin{align*}
        \int\limits_0^T& \!\! \int\limits_\Omega -q_{\sigma,+}(u_1(t),u_2(s))\, \frac{\gamma_\lambda(t+h,s) - \gamma_\lambda(t,s)}{h}\d x \d t\\
        \leq & \quad 
       - \int\limits_0^T \!\! \int\limits_\Omega
        \langle \abs{\nabla u_1(t)}^{p-2}\nabla u_1(t) ,\nabla f_{h}(t)\rangle \d x \d t.
    \end{align*}
    By letting $h \to 0+$, we get
    \begin{align*}
        0 &\geq  \int\limits_0^T \!\! \int\limits_\Omega \Big\{ -q_{\sigma,+}(u_1(t),u_2(s))\, \partial_t \gamma_\lambda(t,s)\\
        &\hspace{13 em}+ \langle \abs{\nabla u_1(t)}^{p-2}\nabla u_1(t),\nabla f(t) \rangle\Big\} \d x \d t\\
        &=  \int\limits_0^T \!\! \int\limits_\Omega \Big\{-q_{\sigma,+}(u_1(t),u_2(s))\, \partial_t \gamma_\lambda(t,s)\\
         +& \gamma_\lambda(t,s) \eta_{\sigma,+}''(u_1(t)\! -\! u_2(s)) \langle \abs{\nabla u_1(t)}^{p-2}\nabla u_1(t),\nabla u_1(t) - \nabla u_2(s) \rangle\Big\}\d x \d t.
    \end{align*}
    In the integrand we used the chain rule to compute $\nabla f(t)$. This completes the proof of \eqref{eq:Key1}.
    
    As for the proof of  \eqref{eq:Key2},
    note that $\eta_{\sigma,-}'(z) = 0$ when $z \geq 0$. Hence, by a similar argument as in the beginning of the proof, we have that
    \[
        g(x,s) := \eta_{\sigma,-}'(u_2(s) - u_1(t))\,\gamma_\lambda(t,s)
    \]
    is contained in $L^p(0,T;W_0^{1,p}(\Omega))$ for all $\lambda>0$ small enough, all $\sigma > 0$ and almost every $t \in (0,T)$.
    The remainder of the proof is essentially similar as above.
\end{proof}

Next we derive fundamental estimates by sending $\sigma \to 0+$ in \eqref{eq:Key1} and \eqref{eq:Key2}.

\begin{proposition}\label{prop:sigma->0}
   Let  $\psi_1,\psi_2$ as well as $ u_1,u_2$  be  as in Theorem \ref{thm:Comparison-Princple}, and let  $\gamma \in C_0^\infty(0,T)$ be a non-negative function.
    Then for all $\lambda$ small enough we have that
    \begin{equation}\label{eq:DoublingOfVariables}
        \int\limits_0^T \!\!\! \int\limits_0^T \!\!\!\int\limits_\Omega\! \bigl(\left( b(u_1(t)) - b(u_2(s)) \right)^+(\partial_t + \partial_s)\gamma_{\lambda}(t,s)\bigr)\d x\d t\d s \,\,\geq\,\, 0.
    \end{equation}
\end{proposition}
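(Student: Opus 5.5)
The plan is to integrate \eqref{eq:Key1} over $s\in(0,T)$ and \eqref{eq:Key2} over $t\in(0,T)$, add the two inequalities, and then let $\sigma\to 0+$. Both inequalities hold for almost every value of the frozen variable and for the same small $\lambda$. Their integrands are integrable in $(x,t,s)$, since $q_{\sigma,\pm}$ grows at most like $|b(z)|+|b(w)|$ and $\eta''_{\sigma,+}$ is bounded by $C/\sigma$. Fubini therefore applies, and the sum reads
\begin{align*}
0\,\geq\, &\int\limits_0^T\!\!\int\limits_0^T\!\!\int\limits_\Omega \Bigl\{-q_{\sigma,+}(u_1(t),u_2(s))\,\partial_t\gamma_\lambda - q_{\sigma,-}(u_2(s),u_1(t))\,\partial_s\gamma_\lambda\Bigr\}\d x\d t\d s\\
&+\int\limits_0^T\!\!\int\limits_0^T\!\!\int\limits_\Omega \gamma_\lambda\,\eta''_{\sigma,+}(u_1(t)-u_2(s))\,\bigl\langle b(\nabla u_1(t))-b(\nabla u_2(s)),\nabla u_1(t)-\nabla u_2(s)\bigr\rangle \d x\d t\d s,
\end{align*}
where $b(\xi)=|\xi|^{p-2}\xi$ is applied to vectors. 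Here I use that $\eta''_{\sigma,-}(z)=\eta''_{\sigma,+}(-z)$, so that the second-order terms of \eqref{eq:Key1} and \eqref{eq:Key2} combine; this matching of signs is the point of using $\eta_{\sigma,-}$ with reversed arguments. Because $\gamma_\lambda\geq 0$, $\eta''\geq 0$ by convexity, and the vector inequality $\langle b(\xi)-b(\zeta),\xi-\zeta\rangle\geq0$ holds, the gradient term is nonnegative and can be dropped. What remains is
$$\iiint \bigl(q_{\sigma,+}(u_1(t),u_2(s))\,\partial_t\gamma_\lambda + q_{\sigma,-}(u_2(s),u_1(t))\,\partial_s\gamma_\lambda\bigr)\,\d x\d t\d s\,\geq\,0.$$

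Next I identify the pointwise limits as $\sigma\to0$. As $\sigma\to 0$, $\eta'_{\sigma,+}(z)=\eta'(z/\sigma)$ tends to $\chi_{\{z>0\}}$, and it takes values in $[0,1]$, since $\eta'$ is nondecreasing from $0$ to $1$. Hence, by dominated convergence in the defining integral \eqref{eq:q-function},
$$q_{\sigma,+}(z,w)\to\int_w^z\chi_{\{s>w\}}b'(s)\,\d s=(b(z)-b(w))^+.$$
Similarly, $\eta'_{\sigma,-}(z)=-\eta'(-z/\sigma)$ tends to $-\chi_{\{z<0\}}$, so
$$q_{\sigma,-}(z,w)\to-\int_w^z \chi_{\{s<w\}}b'(s)\,\d s=(b(w)-b(z))^+ .$$
With $z=u_2(s)$ and $w=u_1(t)$ the second limit is $(b(u_1(t))-b(u_2(s)))^+$ as well. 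Both families are bounded by $|b(z)-b(w)|$, which lies in $L^1(\Omega\times(0,T)^2)$ because $u_j\in L^p$ implies $b(u_j)\in L^{p/(p-1)}\subset L^1$. The factors $\partial_t\gamma_\lambda$ and $\partial_s\gamma_\lambda$ are bounded for fixed $\lambda$. Dominated convergence therefore lets me pass to the limit, and the result is exactly \eqref{eq:DoublingOfVariables}.

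I expect the delicate points to be the justification steps rather than the algebra. First, I must check the sign bookkeeping in \eqref{eq:Key2}: it has to produce $-q_{\sigma,-}\partial_s\gamma_\lambda$ together with a gradient term carrying $-b(\nabla u_2(s))$, so that the two gradient contributions assemble into the monotone pairing. Second, the exceptional null sets in $s$ and $t$ from Lemma~\ref{lemma:Key} must not interfere with Fubini. This is fine because each inequality is integrated in only one outer variable, and the smallness of $\lambda$ is uniform, depending only on $\gamma$ and $\delta$.
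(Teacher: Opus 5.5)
Your proposal is correct and follows essentially the same route as the paper: integrate \eqref{eq:Key1} in $s$ and \eqref{eq:Key2} in $t$, add them, drop the nonnegative monotone gradient term, and let $\sigma\to0$ by dominated convergence using the pointwise limits of $q_{\sigma,\pm}$. Your additional justifications fill in details the paper leaves implicit, namely the joint integrability needed for Fubini and the $L^1$ majorant $|b(z)-b(w)|$ obtained from $b(u_j)\in L^{p/(p-1)}$; this majorant is cruder than the paper's $(b(z)-b(w))^+$ but is enough.
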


\begin{proof}
    By integrating \eqref{eq:Key1} over $s \in (0,T)$ and \eqref{eq:Key2} over $t \in (0,T)$, and adding these two integrals together, we get
    \begin{align*}
         0\,\geq \int\limits_0^T \!\!\! \int\limits_0^T \!\!\! \int\limits_\Omega\!&\Bigl\{ -q_{\sigma,+}(u_1(t),u_2(s))\partial_t \gamma_\lambda(t,s) - q_{\sigma,-}(u_2(s),u_1(t))\partial_s \gamma_\lambda(t,s)\Bigr.\\
        & \Bigl. + \gamma_\lambda(t,s) \eta_{\sigma,+}''(u_1(t) \!-\! u_2(s)) M(\nabla u_1(t),\nabla u_2(s))\Bigr\}\, \d x \d t \d s,
    \end{align*}
    where we have written
    $$M(\nabla u_1,\nabla u_2)\,=\,
\big\langle \abs{\nabla u_1}^{p-2}\nabla u_1  - \abs{\nabla u_2}^{p-2}\nabla u_2, \nabla u_1 - \nabla u_2 \big\rangle.$$
    
    Note that the term on the second row is non-negative. This is because $\eta_{\sigma,+}'' \geq 0$ by convexity, $\gamma_{\lambda} \geq 0$ by assumption, and we have the standard inequality $\langle \abs{a}^{p-2}a - \abs{b}^{p-2}b, a - b \rangle \geq 0$.
    It follows that
    \begin{align*}
      \int\limits_0^T \!\!\!& \int\limits_0^T \!\!\! \int\limits_\Omega\! \left( q_{\sigma,+}(u_1(t),u_2(s))\partial_t \gamma_\lambda(t,s) + q_{\sigma,-}(u_2(s),u_1(t))\partial_s \gamma_\lambda(t,s)\right) \d x \d t \d s\,\\
       &\geq\, 0.  
\end{align*}
    We now prove \eqref{eq:DoublingOfVariables} by letting $\sigma \to 0$.
    To this end, we verify a suitable convergence.
    First, note that when $\sigma \to 0+$
    $$
    \eta_{\sigma,+}'(z)\, \to\, \sgn(z):=\begin{cases}
0, &z\le 0, \\
1, & z> 0,
\end{cases}
    $$ 
     uniformly in $\R \setminus [-a,a]$ where $a > 0$ is fixed.
    Hence, as $\sigma \to 0+$, we have
    \[
    q_{\sigma,+}(z,w) \to (b(z) - b(w))^+ \quad\text{ pointwise in}\quad \R^2,
    \]
    where $ (b(z) - b(w))^+=\max\{b(z) - b(w),0\}$.
    By a straightforward computation, we see that
    \[
        \abs{q_{\sigma,+}(z,w)} \leq (b(z) - b(w))^+\quad \text{ for all}\quad z,w \in \R \,\,\text{and}\,\, \sigma > 0.
    \]
    There are analogous properties for
    \[
    q_{\sigma,-}(z,w) \to (b(w) - b(z))^+ \quad\text{ and } \quad\abs{q_{\sigma,-}(z,w)} \leq (b(w) - b(z))^+.
    \]
    Hence, by letting $\sigma \to 0$, inequality \eqref{eq:DoublingOfVariables} follows from Lebesgue's dominated convergence theorem.
\end{proof}

Next, we let $\lambda \to 0$ in \eqref{eq:DoublingOfVariables} to prove the following estimate.

\begin{proposition}\label{prop:lambda->0}
   Let  $\psi_1,\psi_2$ as well as $ u_1,u_2$  be  as in Theorem \ref{thm:Comparison-Princple}, and let  $\gamma \in C_0^\infty(0,T)$ be non-negative. Then
    \begin{equation}\label{eq:lambda->0}
        \int\limits_0^T \!\! \int\limits_\Omega \left(b(u_1(t)) - b(u_2(t))\right)^+ \gamma'(t)\d x \d t \,\geq\, 0.
    \end{equation}
\end{proposition}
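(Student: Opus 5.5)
We start from \eqref{eq:DoublingOfVariables} in Proposition~\ref{prop:sigma->0} and let $\lambda \to 0$. First we compute the combined derivative of the test function \eqref{eq:gamma-lambda}. Since $\phi\bigl(\frac{t-s}{\lambda}\bigr)$ depends only on $t-s$, it is annihilated by $\partial_t+\partial_s$, and the chain rule gives
\[
(\partial_t+\partial_s)\gamma_\lambda(t,s) \,=\, \frac{1}{\lambda}\,\phi\Bigl(\frac{t-s}{\lambda}\Bigr)\,\gamma'\Bigl(\frac{t+s}{2}\Bigr).
\]
The derivative of the mollifier thus cancels, which is exactly why the doubling was arranged symmetrically. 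Inequality \eqref{eq:DoublingOfVariables} therefore reads
\[
\int\limits_0^T\!\!\int\limits_0^T\!\!\int\limits_\Omega \bigl(b(u_1(t))-b(u_2(s))\bigr)^+ \frac{1}{\lambda}\phi\Bigl(\frac{t-s}{\lambda}\Bigr)\gamma'\Bigl(\frac{t+s}{2}\Bigr)\d x\d t\d s \,\geq\, 0
\]
for all small $\lambda$. It remains to show that the left side converges to $\int_0^T\!\int_\Omega (b(u_1(t))-b(u_2(t)))^+\gamma'(t)\d x\d t$.

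For the limit we substitute $s = t - \lambda r$ with $r\in[-1,1]$, so that the integral becomes
\[
\int_{-1}^{1}\phi(r)\int\limits_0^T\!\!\int\limits_\Omega F(x,t,t-\lambda r)\,\gamma'\Bigl(t-\tfrac{\lambda r}{2}\Bigr)\d x\d t\d r,
\]
where $F(x,t,s)=(b(u_1(x,t))-b(u_2(x,s)))^+$. Since $\gamma$ has compact support in $(0,T)$, for small $\lambda$ all relevant $t$ and $s$ stay in a compact subinterval, so no boundary effects in time arise.

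Next we split the difference from the target integral into two parts.
\begin{enumerate}
\item The change $\gamma'(t-\lambda r/2)\to\gamma'(t)$ costs at most $C\lambda\|\gamma''\|_\infty \|F\|_{L^1}$. This needs $b(u_j)\in L^1(\Omega_T)$, which holds because $|b(u_j)|=|u_j|^{p-1}\in L^{p/(p-1)}\subset L^1$ on the bounded set $\Omega_T$.
\item The remaining error is controlled through the $1$-Lipschitz property of $z\mapsto z^+$:
\[
|F(x,t,t-\lambda r)-F(x,t,t)| \,\leq\, |b(u_2(x,t-\lambda r))-b(u_2(x,t))|.
\]
\end{enumerate}
Integrating the bound in (2) against $\phi(r)|\gamma'(t)|$ reduces the convergence to the continuity of translations in $L^1$. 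Since $b(u_2)\in L^1(\Omega\times(0,T))$, extended by zero outside, we have
\[
\int\limits_0^T\!\!\int\limits_\Omega |b(u_2(t-h))-b(u_2(t))|\d x\d t\to 0 \quad\text{as } h\to 0,
\]
uniformly for $|h|\le\lambda$. Passing to the limit in the displayed inequality then yields \eqref{eq:lambda->0}.

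The main obstacle is the measure-theoretic bookkeeping. We must check that the $s$-integrals in \eqref{eq:DoublingOfVariables} are well defined for the measurable representative, and that Fubini applies, which follows from $F\in L^1(\Omega\times(0,T)^2)$. The analytic content is just $L^1$-continuity of translations, which is standard. The cancellation of the $\phi'$ term from $(\partial_t+\partial_s)$ is the key structural point that makes the $1/\lambda$ factor harmless.
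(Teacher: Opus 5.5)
Your proposal is correct and follows the paper's proof. You compute $(\partial_t+\partial_s)\gamma_\lambda=\frac{1}{\lambda}\phi\bigl(\frac{t-s}{\lambda}\bigr)\gamma'\bigl(\frac{t+s}{2}\bigr)$ so that the $\phi'$ terms cancel, and then let $\lambda\to0$ in \eqref{eq:DoublingOfVariables}. The only difference is that the paper merely invokes the standard mollifier limit with a literature reference, whereas you carry it out explicitly via the substitution $s=t-\lambda r$, the $1$-Lipschitz bound for $z\mapsto z^+$, and $L^1$-continuity of translations of $b(u_2)$, which is a valid way to fill in that step.
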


\begin{proof}
    We consider $\lambda>0$ small enough so that \eqref{eq:DoublingOfVariables} holds and  $\gamma_\lambda \in C_0^\infty((0,T) \times (0,T))$. Differentiating \eqref{eq:gamma-lambda}, we find
    \[
        (\partial_t + \partial_s)\gamma_\lambda(t,s) = \frac{1}{\lambda}\phi\Bigl( \frac{t-s}{\lambda} \Bigr)\,\gamma'\Bigl( \frac{t+s}{2} \Bigr).
    \]
    Here the problematic terms with the derivative $\phi'$ have cancelled.
  With this choice inequality  \eqref{eq:DoublingOfVariables} becomes
    \begin{equation*}
       \int\limits_0^T \!\!\! \int\limits_0^T \!\!\! \int\limits_\Omega \left(b(u_1(t)) - b(u_2(s))\right)^+ \frac{1}{\lambda}\phi\Bigl( \frac{t-s}{\lambda} \Bigr)\gamma'\Bigl( \frac{t+s}{2} \Bigr) \d x \d t \d s \,
      \geq\,0.
      \end{equation*}
     By a standard procedure for approximating Dirac's delta with mollifiers, by sending $\lambda \to 0$ we arrive at
    \[
        \int\limits_0^T \!\! \int\limits_\Omega \left(b(u_1(t)) - b(u_2(t))\right)^+\gamma'\left( t \right)\d x \d t\,\, \geq\,\, 0.
    \]
  See Lemma 2.9 on page 76 in \cite{holden15} for details.  This completes the proof.
\end{proof}

\begin{proof}[Proof of Theorem \ref{thm:Comparison-Princple}]

  Let $0<t_1<t_2<T$. By replacing $\gamma$ in \eqref{eq:lambda->0} by a suitable cut-off function, approximating the characteristic function of the interval $[t_1,t_2]$, we obtain at the limit 
      \[
       \int\limits_\Omega \left(b(u_1(t_2)) - b(u_2(t_2))\right)^+ \d x\,\leq  \int\limits_\Omega \left(b(u_1(t_1)) - b(u_2(t_1))\right)^+ \d x.
    \]
    By this and the sub-additivity of $z \mapsto z^+$, we get
    \begin{align*}
\int\limits_\Omega \left(b(u_1(t_2)) - b(u_2(t_2))\right)^+\! \d x\le& \int\limits_\Omega \left(b(u_1(t_1))-b(\psi_1(t_1))\right)^+\!\d x \\
&+\int\limits_\Omega \left(b(\psi_1(t_1))-b(\psi_2(t_1))\right)^+\! \d x\\
&+\int\limits_\Omega \left(b(\psi_2(t_1))-b(u_2(t_1))\right)^+\! \d x.
\end{align*}
The first and the third terms on the right-hand side  converge  to zero as $t_1\to 0$, and  the second term is zero by \eqref{eq:psi-epsilon}. Thus
\begin{align*}
\int\limits_\Omega \bigl(b(u_1(t_2)) - b(u_2(t_2))\bigr)^+ \d x\,\,\leq\,\, 0,
\end{align*}
and from which we conclude that $u_1(x,t_2) \leq u_2(x,t_2).$ Since $t_2$ is arbitrary,
 this completes the proof of the comparison principle.
\end{proof}

In some cases we can allow $\varepsilon = 0$. Constant lateral boundary values are included in the next result.

\begin{proposition}[\textsf{Monotone Lateral Values}]
\label{prop:monotone} Let $1<p<\infty.$ Suppose that the lateral boundary values are monotone, say
  $$\psi(x,t)\leq \psi(x,s)\quad\text{when}\quad 0<t<s<T,\,\,x\in \partial \Omega.$$ Then the solution in Definition \ref{def:PDE} is unique.
\end{proposition}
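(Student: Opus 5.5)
Let $u_1,u_2$ be two weak solutions with the same boundary values $\psi$ (same lateral values, same initial values $\psi_0$). The plan is to rerun the doubling argument of Lemma~\ref{lemma:Key}, Proposition~\ref{prop:sigma->0} and Proposition~\ref{prop:lambda->0} with $\varepsilon=0$. Monotonicity replaces the gap $\varepsilon$ in the one place it was used. That place is the verification in the proof of Lemma~\ref{lemma:Key} that the test functions
\[
f(x,t)=\eta_{\sigma,+}'(u_1(x,t)-u_2(x,s))\gamma_\lambda(t,s), \qquad g(x,s)=\eta_{\sigma,-}'(u_2(x,s)-u_1(x,t))\gamma_\lambda(t,s)
\]
vanish on the lateral boundary, i.e.\ belong to $L^p(0,T;W^{1,p}_0(\Omega))$. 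Everything after that step (Stekloff averages, Lemma~\ref{lemma:sub-diff}, $\sigma\to0$, $\lambda\to0$) never used $\varepsilon$.

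The key idea is to break the symmetry in the mollifier. Instead of a symmetric $\phi$, choose $\phi\ge0$ supported in $(-1,0)$, so $\gamma_\lambda(t,s)\neq0$ only when $t<s$. On $\partial\Omega$ we then have $u_1(t)-u_2(s)=\psi(t)-\psi(s)\le 0$ by monotonicity. Hence $\eta_{\sigma,+}'(u_1(t)-u_2(s))=0$ there, and likewise $\eta_{\sigma,-}'(u_2(s)-u_1(t))=0$, since $\eta_{\sigma,-}'(z)=0$ for $z\ge0$. So both $f$ and $g$ have zero trace.

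The main obstacle is making this trace statement rigorous, since $\eta'_{\sigma,+}$ is only Lipschitz. The plan is to write
\[
u_1(t)-u_2(s)=\bigl(u_1(t)-\psi(t)\bigr)-\bigl(u_2(s)-\psi(s)\bigr)+\bigl(\psi(t)-\psi(s)\bigr).
\]
The first two brackets lie in $W^{1,p}_0(\Omega)$ for a.e.\ $t,s$. The last term is continuous and $\le0$ on $\partial\Omega$. A function whose positive part has zero trace, composed with the Lipschitz $\eta'_{\sigma,+}$ vanishing on $(-\infty,0]$, lies in $W^{1,p}_0$. To justify this I would use the standard fact that if $v\in W^{1,p}(\Omega)$ and $v\le w$ with $w\in W^{1,p}_0$ near the boundary, then $v^+\in W^{1,p}_0$. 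Here take $v=u_1(t)-u_2(s)$ and $w=(u_1(t)-\psi(t))-(u_2(s)-\psi(s))$, which needs $(\psi(t)-\psi(s))^+$ to have zero trace. If $\psi(t)-\psi(s)$ is only $\le0$ on $\partial\Omega$ and not in $\Omega$, I would instead argue with $(\psi(t)-\psi(s))^+\in W^{1,p}_0$. That holds because it is continuous on $\overline\Omega$, in $W^{1,p}$, and vanishes on the Lipschitz boundary.

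Two further checks are needed. First, the cancellation in Proposition~\ref{prop:lambda->0} does not depend on the symmetry of $\phi$: one still has $(\partial_t+\partial_s)\gamma_\lambda=\lambda^{-1}\phi((t-s)/\lambda)\gamma'((t+s)/2)$, and the Dirac approximation needs only $\int\phi=1$. So we obtain
\[
\int_0^T\!\!\int_\Omega (b(u_1)-b(u_2))^+\gamma'\,\d x\,\d t\ \ge 0.
\]
Second, in the final step of the proof of Theorem~\ref{thm:Comparison-Princple}, the middle term $\int_\Omega(b(\psi(t_1))-b(\psi(t_1)))^+\d x$ is zero trivially. The initial conditions \eqref{eq:InitialValues} kill the other two terms. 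Hence $u_1\le u_2$, and exchanging the roles of $u_1$ and $u_2$ (the monotonicity hypothesis is symmetric in the two solutions) gives $u_1=u_2$.
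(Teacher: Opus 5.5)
Your proposal is correct and follows the paper's proof. The paper likewise replaces the symmetric kernel by the one-sided one $\gamma^+_\lambda(t,s)=\lambda^{-1}\phi\bigl(\frac{s-t}{\lambda}-1\bigr)\gamma\bigl(\frac{t+s}{2}\bigr)$, supported in $t<s<t+2\lambda$, so that monotonicity makes $\eta'_{\sigma,+}(u_1(t)-u_2(s))\,\gamma^+_\lambda$ vanish on the lateral boundary, and then reruns the comparison argument with $\varepsilon=0$. Your decomposition together with $(\psi(t)-\psi(s))^+\in W^{1,p}_0(\Omega)$ simply supplies the justification of the $L^p(0,T;W^{1,p}_0(\Omega))$ membership, which the paper only asserts.
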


\begin{proof} We can use the same proof, if we make sure that $s\geq t$, so that
  $\psi(x,s) \geq \psi(x,t).$  Replace $\gamma_{\lambda}$ in (\ref{eq:gamma-lambda}) by the modified test function
  $$\gamma^+_{\lambda}(t,s) = \frac{1}{\lambda}\,\phi\Bigl(\frac{s-t}{\lambda} - 1\Bigr)\,\gamma\Bigl(\frac{t+s}{2}\Bigr).$$
  This choice guarantees that the integrands vanish outside the interval $t < s < t + 2\lambda$. Again, the function
  $$f(x,t)\,=\, \eta'_{\sigma,+}\!\bigl(u_1(x,t)-u_2(x,s)\bigr)\,\gamma_{\lambda}^+(t,s)$$
  belongs to $L^p(0,T;W_0^{1,p}(\Omega)).$
  \end{proof}

\section{Proof of the uniqueness}
In this section we prove the main result of the paper, uniqueness with sign-changing $C^2$ boundary data.
Our proof relies on properties of suitably regular auxiliary solutions.
Given $\psi \in C^2(\overline{\Omega_T})$, we say that a weak solution $u \in C(\overline{\Omega_T})$ with boundary values $\psi$ is \emph{$t$-regular} if the Sobolev time derivative $\partial_t (\abs{u}^{p-2}u)$ exists and
\[
    \int\limits_0^T \!\! \int\limits_\Omega (\abs{\partial_t b(u)}^2 + \abs{\nabla u}^{p}) \d x \d t\, < \,\infty.
\]
We now note that there exists one and only one such $t$-regular solution, see \cite[Pages 4-6]{lindqvistps} for details. It is unique at least within their own regularity class. The next theorem guarantees that there are no other weak  solutions
\begin{theorem}\label{thm:Main} Let $p \geq 2.$
    A solution with  boundary values $\psi \in C^2(\overline{\Omega_T})$ is unique.
\end{theorem}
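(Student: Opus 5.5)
The plan follows the strategy announced in the introduction. Fix $\psi \in C^2(\overline{\Omega_T})$ and let $u$ be an arbitrary weak solution with boundary values $\psi$ in the sense of Definition~\ref{def:PDE}. For $\varepsilon>0$, let $u_{+\varepsilon}$ and $u_{-\varepsilon}$ be the $t$-regular solutions with boundary values $\psi+\varepsilon$ and $\psi-\varepsilon$; these are again $C^2$, so the $t$-regular solutions exist by \cite{lindqvistps}. Since $\psi \le (\psi+\varepsilon)-\varepsilon$ on $\partial_p\Omega_T$, Theorem~\ref{thm:Comparison-Princple} gives $u \le u_{+\varepsilon}$. Symmetrically $u_{-\varepsilon} \le u$. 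Hence
\[
u_{-\varepsilon}\,\le\, u \,\le\, u_{+\varepsilon}\quad\text{in } \Omega_T \text{ for every } \varepsilon>0 .
\]
Note that no regularity of $u$ is used here, which is exactly why the comparison principle was proved without time derivatives. It then suffices to show that $u_{+\varepsilon}$ and $u_{-\varepsilon}$ converge, as $\varepsilon\to 0$, to the same limit, namely the $t$-regular solution $v$ with boundary values $\psi$. This forces $u=v$, and uniqueness follows.

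For the convergence I would compare $u_{\pm\varepsilon}$ with $v$ directly, using that all three solutions have time derivatives $\partial_t b(\cdot)\in L^2$. The natural quantity is
\[
w_\varepsilon \,=\, u_{+\varepsilon}-\varepsilon - v ,
\]
which lies in $L^p(0,T;W^{1,p}_0(\Omega))$. One can use $w_\varepsilon$ itself (or $H_\delta(w_\varepsilon)$, or $b(u_{+\varepsilon})-b(v+\varepsilon)$ truncated) as a test function in the difference of the two equations. The gradient terms then give the monotone quantity
\[
\int\!\!\int \bigl\langle |\nabla u_{+\varepsilon}|^{p-2}\nabla u_{+\varepsilon}-|\nabla v|^{p-2}\nabla v,\ \nabla u_{+\varepsilon}-\nabla v\bigr\rangle \,\ge\, c\int\!\!\int|\nabla w_\varepsilon|^p .
\]
This is where $p\ge 2$ enters. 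The time term is $\int\!\!\int \partial_t\bigl(b(u_{+\varepsilon})-b(v)\bigr)\,w_\varepsilon$, and it must be rewritten as a time derivative of a nonnegative energy plus an error. The error is controlled by $\varepsilon$ times the $L^2$ norms of $\partial_t b(v)$ and $\partial_t b(u_{+\varepsilon})$. It is also controlled by the initial discrepancy $\int_\Omega |b(\psi(0)+\varepsilon)-b(\psi(0))|$, which tends to $0$.

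The key input is an $\varepsilon$-uniform bound for $\|\partial_t b(u_{\pm\varepsilon})\|_{L^2(\Omega_T)}$ and $\|\nabla u_{\pm\varepsilon}\|_{L^p}$. I would take this from the construction in \cite{lindqvistps}, where the bound depends only on the $C^2$ norm of the boundary data. That norm is the same for $\psi\pm\varepsilon$, and this is where $\psi\in C^2$ is used. With such bounds, a Gronwall-type estimate of the form
\[
\sup_t\int_\Omega |b(u_{+\varepsilon})-b(v)|\,|u_{+\varepsilon}-v| \,+\, \int\!\!\int |\nabla (u_{+\varepsilon}-v)|^p \,\le\, C\varepsilon^{\alpha}
\]
should follow. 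Alternatively, one can argue by compactness: the uniform bounds give a subsequence with $u_{\pm\varepsilon}\to$ some $t$-regular solution with data $\psi$, and uniqueness in the $t$-regular class (the argument at the start of Section~\ref{sec:comp}) identifies both limits with $v$. The compactness route seems cleaner. It needs $b(u_{\pm\varepsilon})$ bounded in $W^{1,2}$ in time and $u_{\pm\varepsilon}$ bounded in $L^p W^{1,p}$, which gives strong $L^1$ convergence of $b(u_{\pm\varepsilon})$ via an Aubin--Lions type argument, and hence passage to the limit in the weak formulation (the gradient term via Minty's trick).

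The main obstacle is the uniform-in-$\varepsilon$ estimate on $\partial_t b(u_{\pm\varepsilon})$. It is obtained by testing with $\partial_t(u-\psi)$-type functions, which requires handling $b'(u)=(p-1)|u|^{p-2}$. This weight degenerates at $u=0$, precisely where sign changes occur. For $p\ge2$ one has $|\partial_t b(u)|^2=(p-1)|u|^{p-2}\,\partial_t u\,\partial_t b(u)$, and the weight is bounded on bounded sets. I expect this is why the argument is restricted to $p\ge 2$, and I would first check that the estimate in \cite{lindqvistps} indeed depends only on $\|\psi\|_{C^2}$ and $\|\psi\|_\infty$.
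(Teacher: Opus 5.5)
Your proof is correct and follows the paper: you get the squeeze $u_{-\varepsilon}\le u\le u_{+\varepsilon}$ from Theorem~\ref{thm:Comparison-Princple}, and then use the compactness route you prefer. That route is exactly what the paper packs into its appeal to the finite majorant of \cite[Theorem 5]{lindqvistps}, which is uniform in $\varepsilon$ for the data $\psi\pm\varepsilon$, together with uniqueness of $t$-regular solutions \cite[Corollary 3]{lindqvistps}, giving $u_{\pm\varepsilon}\to u_0$ a.e.\ along a subsequence. The direct Gronwall-type route you sketch first is not needed and should be dropped: $\partial_t\bigl(b(u_{+\varepsilon})-b(v)\bigr)\,(u_{+\varepsilon}-v)$ is not an exact time derivative of a function of $(u_{+\varepsilon},v)$ unless $p=2$, and the resulting error is not evidently $O(\varepsilon)$.
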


\begin{proof}
   Let $u$ be an \emph{arbitrary} solution with the given boundary values $\psi$. According to \cite[Theorem 5]{lindqvistps}, there always exists a $t$-regular solution with the boundary values $\psi\in C^2(\overline{\Omega_T})$. Moreover, it is well-known that such a $t$-regular solution is unique, see for example \cite[Corollary 3]{lindqvistps}, although the existence of less regular solutions is not yet excluded. Given a constant $\varepsilon > 0$,   
   there exist unique $t$-regular solutions
\begin{equation*}\begin{cases}
    u_{+\varepsilon}\quad\text{with boundary values}\quad\psi + \eps\\
   \, u_{0}\quad\,\,\text{with boundary values}\quad\psi + 0\\
    u_{-\varepsilon}\quad\text{with boundary values}\quad\psi - \eps.
\end{cases}\end{equation*}
On the other hand, by the comparison principle in Theorem \ref{thm:Comparison-Princple}, we have for almost every $(x,t) \in \Omega \times (0,T)$ that
    \begin{equation}\label{eq:Compare}
      u_{-\varepsilon}(x,t) \leq u(x,t) \leq u_{+\varepsilon}(x,t).
    \end{equation}
    It is essential that the above comparison principle does not require $u$ to be $t$-regular. We claim that $u = u_0$.
    
    Finally, since
    $\psi \in C^1(\overline{\Omega_T})$ and $\|\partial_t \nabla \psi\|_{L^p(\Omega_T)}  < \infty$ by our assumption, we may use  \cite[Theorem 5 and Corollary 3]{lindqvistps} to conclude that, upon
    passing to a subsequence if necessary, 
    \begin{align*}
u_{\pm\varepsilon}\, \to\, u_0\quad \text{in}\quad  L^p(0,T;W^{1,p}(\Omega))\quad \text{as} \quad \varepsilon \to 0.
    \end{align*}
    The point is that both sequences have the same limit, viz.\ $u_0$.
   For  a  subsequence the convergences  hold pointwise for almost every $(x,t) \in \Omega \times (0,T)$.
      This together with \eqref{eq:Compare} implies that $u=u_0$,  and thus the proof of the uniqueness is complete.
\end{proof}

Our convenient assumption $\psi \in C^2(\overline{\Omega_T})$ comes from the convergence argument in the proof above. To be more precise, it is based upon the \emph{finite} majorant in  \cite[Theorem 5]{lindqvistps} 
\begin{align*}
     \int\limits_0^{T}\!\!\! \int\limits_{\Omega}&\Bigl(\Big\vert\frac{\dd}{\dd t}\bigl(|u|^{\frac{p-2}{2}}u\bigr)\Big\vert^2 +|\nabla u|^p + |u|^p\Bigr) \d x \d t  +\int\limits_{\Omega}|\nabla u(x,T)|^p\d x\\
  &\le C\int\limits_{\Omega}\left(|\psi(x,0)|^p +|\nabla \psi(x,0)|^p\right)\d x \\
&\hspace{5 em}  +C\int\limits_0^T\!\!\!\int\limits_{\Omega}\Bigl(|\psi|^p+|\nabla \psi|^p+\Bigl|\frac{\partial\psi}{\partial t}\Bigr|^p+\Bigl|\frac{\partial \,\,}{\partial t}\nabla \psi \Bigr|^p\Bigr)\d x \d t.
\end{align*}
The proof also has an interesting consequence. Since we obtained $u = u_0$, we can read off the following  result from the above estimate. \enlargethispage{2\baselineskip}
\begin{corollary} A weak solution with $C^2$ boundary data possesses the Sobolev derivatives
  \begin{align*} 
      &\partial_t(|u|^{\frac{p-2}{2}}u)\,\in L^2(\Omega_T),\qquad p \geq 2.
    \end{align*}
\end{corollary}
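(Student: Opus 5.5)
The plan is to read the corollary off the proof of Theorem~\ref{thm:Main}, since that proof identified an \emph{arbitrary} weak solution $u$ with boundary values $\psi\in C^2(\overline{\Omega_T})$ with the $t$-regular solution $u_0$. The first step is to recall that, by the uniqueness just established, $u=u_0$ almost everywhere in $\Omega_T$. Hence it suffices to prove the regularity claim for $u_0$, which is precisely the solution constructed in \cite[Theorem 5]{lindqvistps}.

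The second step is to invoke the majorant displayed after the proof of Theorem~\ref{thm:Main}. For $\psi\in C^2(\overline{\Omega_T})$, every quantity on its right-hand side is finite. Indeed, $\psi$, $\nabla\psi$, $\partial_t\psi$ and $\partial_t\nabla\psi$ are continuous on the compact set $\overline{\Omega_T}$, and $\Omega$ is bounded, so each term is bounded by a constant times $|\Omega_T|$ or $|\Omega|$. Consequently
\[
\int\limits_0^T\!\!\int\limits_\Omega\Bigl|\frac{\partial}{\partial t}\bigl(|u_0|^{\frac{p-2}{2}}u_0\bigr)\Bigr|^2\d x\d t<\infty .
\]

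The one point needing care is the sense in which this time derivative exists. The estimate in \cite{lindqvistps} is derived for the $t$-regular solution, possibly via approximations, with the derivative understood as a Sobolev (distributional) derivative. I would state that the bound is obtained there for the limit object, so $\partial_t(|u_0|^{\frac{p-2}{2}}u_0)\in L^2(\Omega_T)$ as a weak derivative. Should the bound only be available for approximants, a standard weak compactness argument in $L^2$ passes it to the limit. Transferring this from $u_0$ to $u$ is immediate because the two functions agree almost everywhere. The restriction $p\ge 2$ is inherited from Theorem~\ref{thm:Main}.
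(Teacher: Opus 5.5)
Your proposal is correct and follows the paper's own argument: the identification $u=u_0$ from the proof of Theorem~\ref{thm:Main}, combined with the majorant of \cite[Theorem 5]{lindqvistps}, whose right-hand side is finite for $\psi\in C^2(\overline{\Omega_T})$. Your remarks on the sense of the weak derivative and on transferring it from $u_0$ to $u$ via a.e.\ equality spell out what the paper leaves implicit.
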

\def\cprime{$'$}

\end{document}